\documentclass[a4paper,reqno]{amsart}
\usepackage{amsmath}
\usepackage{bm}
\usepackage{amsfonts}
\usepackage{amsthm}
\usepackage{amssymb}
\usepackage{mathrsfs}
\usepackage{booktabs}
\usepackage{lipsum}
\usepackage{bussproofs}
\usepackage{hyperref}
\usepackage{tikz}
\usetikzlibrary{arrows,automata,positioning,decorations.markings,patterns}
\usepackage{python}

\usepackage{amsmath}
\usepackage{amsfonts}
\usepackage{amsthm}
\usepackage{amssymb}
\usepackage{mathrsfs}

\def\parentheses#1{{\left( {#1} \right)}}
\def\of{\parentheses}

\def\Fraction#1/#2{\frac{#1}{#2}}
\def\fraction#1/#2{\tfrac{#1}{#2}}
\def\solidus#1/#2{%
    {{#1} \!\left/\!\vphantom{{#1}{#2}}\!\right. {#2}}%
}

\def\blackboard{\mathbb}

\def\infinity{\infty}

\def\naturals{\blackboard N}

\def\reals{\blackboard R}

\def\set#1{{\left\lbrace {#1} \right\rbrace}}
\def\setOf#1:#2{%
    \set{{#1} \,\left\vert\vphantom{{#1}{#2}}\right. {#2}}%
}

\def\emptySet{\varnothing}

\def\sequence#1{{\left\langle {#1} \right\rangle}}
\def\sequenceOf#1:#2{%
    \sequence{{#1} \,\left\vert\vphantom{{#1}{#2}}\right. {#2}}%
}

\def\familyOf#1:#2{{\parentheses{#1}_{#2}}}

\def\union{\cup}

\def\intersect{\cap}

\def\setMinus{\setminus}

\def\functionSpace#1->#2{{{#2}^{#1}}}

\def\function#1:#2->#3{{{#1} \colon {#2} \to {#3}}}
\def\mapsTo{\mapsto}
\def\map#1->#2{\parentheses{{#1} \mapsTo {#2}}}

\def\Restriction#1|#2{{\left.\! {#1} \right\vert_{#2}}} 

\def\presentation#1{{\left\langle {#1} \right\rangle}}
\def\presentationOf#1:#2{%
    \presentation{{#1} \,\left\vert\vphantom{{#1}{#2}}\right. {#2}}%
}

\def\interval#1#2,#3#4{{\left#1 {#2}, {#3} \right#4}}

\def\Graph#1{{\Gamma\of{#1}}}

\def\p{\parentheses}

\newtheorem{presentment}{}[section]

\theoremstyle{plain}
\newtheorem{theorem}[presentment]{Theorem}
\newtheorem{proposition}[presentment]{Proposition}
\newtheorem{lemma}[presentment]{Lemma}

\theoremstyle{definition}

\newtheorem{example}[presentment]{Example}

\newtheorem{question}[presentment]{Question}

\theoremstyle{remark}

\newtheorem*{note}{Note}

    \def\fraction#1/#2{\tfrac{#1}{#2}}
    \def\Fraction#1/#2{\frac{#1}{#2}}
\newcommand*{\BigFraction}{}
    \def\BigFraction#1/#2{\dfrac{\displaystyle{#1}}{\displaystyle{#2}}}

    \def\setOf#1:#2{\set{{#1} \,\left\vert\vphantom{{#1}{#2}}\right. {#2}}}

    \def\sequenceOf#1:#2{\sequence{{#1} \,\left\vert\vphantom{{#1}{#2}}\right. {#2}}}

    \def\function#1:#2->#3{{{#1} \colon {#2} \to {#3}}}

\tikzset{
    o/.style={
        shorten >=#1,
        decoration={
            markings,
            mark={
                at position 1
                with {
                    \draw circle [radius=#1];
                }
            }
        },
        postaction=decorate
    },
    o/.default=2pt
}

\usepackage{pst-node}
\usepackage{tikz-cd} 
\tikzset{node distance=2cm, auto}

\allowdisplaybreaks[0]
\title{The Set-Self-Tietze Property} 
\author{Andrew Wood}

\subjclass{}

\email {} \email {andrew.wood@stcatz.ox.ac.uk}

\address
{Mathematical Institute, University of Oxford, Andrew Wiles Building
Radcliffe Observatory Quarter 
Woodstock Road
Oxford
OX2 6GG }

\begin{document}

\maketitle

\begin{abstract}
We introduce the set-self-Tietze property, an analogue of the self-Tietze property for upper semi-continuous set-valued functions.  A topological space $X$ is \emph{self-Tietze}, if for every closed $A \subseteq X$ and continuous function $f : A \to X$, there is a continuous extension $F : X \to X$ of $f$.  A topological space $X$ is \emph{set-self-Tietze}, if for every closed $A \subseteq X$ and upper semi-continuous set-valued function $f : A \to 2^X$, there exists an upper semi-continuous set-valued function $F : X \to 2^X$ such that $\left. F \right|_A = f$.  We show every compact metric space is set-self-Tietze, and that the torus is not self-Tietze.   
\end{abstract}

\medskip
\noindent
\textbf{2020 Mathematics Subject Classification.}
{\small {54C20 (primary), 54C60, 37B99.}}

\smallskip
\noindent
\textbf{Keywords.}
{\small {Tietze extension theorem, set-valued maps, upper semicontinuous multifunctions, extension properties, compact metric spaces, closed-relation dynamical systems.}}

\section{Introduction}
\renewcommand{\thepresentment}{\Alph{presentment}}

The Tietze Extension Theorem, which states that a space $X$ is normal if, and only if, every real-valued continuous function defined on a closed subspace of $X$ can be extended to the entire space $X$, is a well-known extension result in topology. It is natural to then ask what happens if one changes the codomain of these continuous functions from $\reals$ to our given space $X$.  This leads us to the notion of a self-Tietze space, introduced in~\cite{davies2011a}.  More precisely, a space $X$ is {\color{blue} \emph{self-Tietze}}, if every continuous function defined on a closed subspace of $X$ to $X$ can be extended to a continuous self-map on $X$. Immediately from the Tietze Extension Theorem, one observes both $[0, 1]$ and $\reals$ are self-Tietze.   It is shown in~\cite{davies2011a} that the Tychonoff Plank is self-Tietze, yet the irrational double-arrow space is not.  We will show the torus is not self-Tietze.  

In this note, we introduce a new topological property analogous to self-Tietze but for upper semi-continuous (usc) set-valued functions $f : X \to 2^Y$ (where $2^Y$ is the collection of non-empty closed subsets of $Y$).  We say a topological space $X$ is {\color{blue} \emph{set-self-Tietze}}, if for every closed $A \subseteq X$ and usc $f : A \to 2^X$ there exists usc extension $F : X \to 2^X$ of $f$. The main result we prove is shown below.  

\begin{theorem}\label{theorem:compact-metric-is-set-self-tietze}
Every compact metric space $X$ is set-self-Tietze. 
\end{theorem}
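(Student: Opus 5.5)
The plan is to work with graphs rather than directly with the neighbourhood definition of upper semi-continuity, because in the compact metric setting the two are equivalent. The first step is to record this standard equivalence. For a compact Hausdorff (here compact metric) codomain $X$ and any space $Z$, a set-valued function $\function F:Z->2^X$ is usc if and only if its graph $\Graph{F}=\setOf{(z,y)}:{y\in F(z)}$ is closed in $Z\times X$.
\begin{itemize}
\item For the direction usc $\Rightarrow$ closed graph, use regularity of $X$ to separate a point $y\notin F(z)$ from the closed set $F(z)$.
\item For the direction closed graph $\Rightarrow$ usc, use compactness of $X$: if $U\supseteq F(z)$ is open and there are $z_n\to z$ (or a net) with points $y_n\in F(z_n)\setminus U$, then a cluster point $y$ of the $y_n$ lies in $X\setminus U$ and also lies in $F(z)$ by closedness of the graph. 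This is a contradiction.
\end{itemize}
In particular $\Graph{f}$ is closed in $A\times X$. Since $A$ is closed in $X$, it is also closed in $X\times X$.

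The degenerate case $A=\emptySet$ is disposed of by taking $F(x)=X$ for all $x$. Otherwise we define the extension. Let $d$ be the metric on $X$. Put $F(x)=f(x)$ for $x\in A$, and for $x\notin A$ put
\[
F(x)=\overline{\bigcup\setOf{f(a)}:{a\in A,\ d(x,a)\le 2\,d(x,A)}} .
\]
Since $d(x,A)>0$ for $x\notin A$, the index set is nonempty. Hence each $F(x)$ is a nonempty closed set and $F$ maps $X$ into $2^X$. Clearly $\Restriction F|A=f$.

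The remaining step, which is the heart of the proof, is to show $\Graph{F}$ is closed; usc then follows from the first paragraph. Since $X$ is metric it suffices to use sequences. Take $(x_n,y_n)\to(x,y)$ with $y_n\in F(x_n)$. Passing to subsequences, we may assume either all $x_n\in A$ or all $x_n\notin A$.
\begin{itemize}
\item If all $x_n\in A$, then $x\in A$ and $(x,y)\in\Graph{f}$ by closedness of $\Graph{f}$.
\item If all $x_n\notin A$, choose $a_n\in A$ with $d(x_n,a_n)\le 2d(x_n,A)$ and $y_n'\in f(a_n)$ with $d(y_n,y_n')<1/n$. By compactness of $A$, pass to a subsequence with $a_n\to a\in A$. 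Then $y_n'\to y$, and closedness of $\Graph{f}$ gives $y\in f(a)$.
\begin{itemize}
\item If $x\notin A$, continuity of $d$ and of $d(\cdot,A)$ gives $d(x,a)\le 2d(x,A)$, so $y\in F(x)$.
\item If $x\in A$, then $d(x_n,a_n)\le 2d(x_n,A)\le 2d(x_n,x)\to 0$. Hence $a=x$ and $y\in f(x)=F(x)$.
\end{itemize}
\end{itemize}

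The main obstacle is precisely this behaviour at boundary points of $A$. A naive extension, such as $F\equiv X$ off $A$, destroys upper semi-continuity at points of $A$. The factor $2d(x,A)$ in the definition of $F$ is what forces the chosen points $a_n$ to converge to the limit point $x$ whenever $x\in A$, and this is the step that must be checked with care.
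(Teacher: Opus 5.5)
Your proof is correct and is essentially the paper's argument. Your $F$ coincides with $f\circ g$ for the closed relation $g(x)=\{a\in A : d(x,a)\le 2\,\text{dist}(x,A)\}$, whereas the paper uses the exact nearest-point relation $d(x,a)=\text{dist}(x,A)$. The paper then gets upper semi-continuity from two general facts: a closed relation with full projection defines a usc map, and compositions of usc maps are usc. You instead verify the closed graph directly with sequences. The factor $2$ is inessential, since compactness of $A$ lets you use factor $1$ and your boundary argument is unchanged. Your separate treatment of $A=\emptyset$ covers a case the paper's proof passes over.
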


\begin{note}
As an immediate consequence, every CR-dynamical system $\p{X, G}$ on a compact metric space $X$ extends to a set-valued dynamical system.  See~\cite{minimality_CR, transitivity_CR, nagar_transitivity_CR, wood2025shadowingcrdynamicalsystems, banic2025specificationmahaviersystemsclosed, greenwood2025transitivitycrdynamicalsystems} for literature on CR-dynamical systems. 
\end{note}

Indeed, as we show the torus is not self-Tietze, we find the analoguous statement of Theorem~\ref{theorem:compact-metric-is-set-self-tietze} for self-Tietze does not hold.

\renewcommand{\thepresentment}{\thesection.\arabic{presentment}}

\section{Proof of Theorem~\ref{theorem:compact-metric-is-set-self-tietze}}

In this section, we prove our main result, Theorem~\ref{theorem:compact-metric-is-set-self-tietze}. 

\begin{proof}
Suppose $A$ is closed in $X$ and $f : A \to 2^X$ is usc. Define 
\[
G = \setOf{\p{x, y} \in X \times A}:{d\of{x, y} = \text{dist}\of{x, A}}.
\]
We claim $G$ is a closed in $X \times A$ with $p_1\of{G} = X$, where $p_1$ is projection onto first coordinate. To see why $p_1\of{G} = X$, we observe for each $x \in X$ that dist$\of{x, -} : A \to \reals$ is continuous on the compact space $A$, thus attaining its minimum.  Next, suppose $\p{x_n, y_n} \in G$ for each $n \in \naturals$ and $\lim_{n \to \infinity} \p{x_n, y_n} = \p{x, y}$. We show $\p{x, y} \in G$. Indeed, $\p{x, y} \in X \times A$, because $X \times A$ is closed and $\sequenceOf{\p{x_n, y_n}}:{n \in \naturals}$ is a sequence in $X \times A$. Recall metrics are continuous, so using sequential characterisation of continuity we obtain 
\[
\lim_{n\to\infinity} d\of{x_n, y_n} = d\of{x, y}. 
\]
On the other hand, dist$\p{-, A} : X \to \reals$ is continuous, so using sequential characterisation of continuity we obtain 
\[
\lim_{n\to\infinity} \text{dist}\of{x_n, A} = \text{dist}\of{x, A}.
\]
For each $n \in \naturals$, $\p{x_n, y_n} \in G$, which implies $d\of{x_n, y_n} = \text{dist}\of{x_n, A}$. Hence, $d\of{x, y} = \text{dist}\of{x, A}$, which means $\p{x, y} \in G$. It follows $G$ is closed in $X \times A$ with $p_1\of{G} = X$, as claimed. Because $X$ and $A$ are compact metric spaces, there exists usc $g : X \to 2^A$ with $\Graph{g} = G$, where $\Graph{g}$ denotes the graph of $g$. 

Now, let $F = f \circ g$. Observe $F : X \to 2^X$ is usc, since it is the composition of usc functions $f$ and $g$. Moreover, for each $x \in A$, $F\of{x} = f\of{g\of{x}} = f\of{x}$ because 
\[
\p{x, y} \in G \Longleftrightarrow d\of{x, y} = \text{dist}\of{x, A} = 0 \Longleftrightarrow x = y.
\]
Thus, $X$ is set-self-Tietze. 
\end{proof}

We pose the following natural question as a next step.  

\begin{question}
Is every compact Hausdorff space set-self-Tietze? 
\end{question}

For example, the Tychonoff Plank is compact Hausdorff and non-metrizable. It is known the Tychonoff Plank is self-Tietze.

\begin{question}
Is the Tychonoff Plank set-self-Tietze?
\end{question}

\section{The torus is not self-Tietze}
In this section, we prove that the torus is not self-Tietze.  It turns out, this can be readily seen as a consequence of two results. The first appears in~\cite{davies2011a}, and the second is well-known.  

\begin{lemma}
$\p{\text{\emph{The Self-Tietze Projection Characterisation}}}$. A topological space $X = \prod_{i \in I} X_i$ is self-Tietze if, and only if, for every $i \in I$, every closed $C \subseteq X$, and every continuous $f : C \to X_i$, there is a continuous extension $F : X \to X_i$ of $f$. 
\end{lemma}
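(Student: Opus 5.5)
The plan is to prove both directions directly from the definition of self-Tietze. The only tools needed are the universal property of the product topology and the continuity of projections.

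For the forward direction, assume $X = \prod_{i \in I} X_i$ is self-Tietze. Fix $i \in I$, a closed $C \subseteq X$ and a continuous $f : C \to X_i$. The idea is to lift $f$ into $X$ by choosing a base point. I will assume $X \neq \emptySet$, since otherwise $C = \emptySet$ and the statement is vacuous; then fix a point $x^* \in X$. Define $\tilde f : C \to X$ coordinatewise by
\[
p_i \circ \tilde f = f, \qquad p_j \circ \tilde f \equiv x^*_j \quad (j \neq i).
\]
Each coordinate is continuous, so $\tilde f$ is continuous by the universal property of the product. Since $X$ is self-Tietze, $\tilde f$ has a continuous extension $\tilde F : X \to X$. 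Then $F = p_i \circ \tilde F : X \to X_i$ is continuous, and for $c \in C$ we get $F(c) = p_i(\tilde f(c)) = f(c)$. So $F$ is the required extension.

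For the reverse direction, take a closed $C \subseteq X$ and a continuous $f : C \to X$. For each $i$, the map $p_i \circ f : C \to X_i$ is continuous, so by hypothesis it has a continuous extension $F_i : X \to X_i$. Let $F : X \to X$ be the map with coordinates $F_i$, that is, $F(x) = \sequenceOf{F_i(x)}:{i \in I}$. It is continuous by the universal property of the product. On $C$, every coordinate $F_i$ agrees with $p_i \circ f$, so $F$ agrees with $f$. Choosing one extension $F_i$ for each $i$ uses the axiom of choice when $I$ is infinite, which is standard.

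I do not expect a genuine obstacle. The only delicate point is the forward direction, where a base point is needed to fill in the coordinates $j \neq i$, and this is harmless once the empty case is set aside.
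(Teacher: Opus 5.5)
Your proof is correct and complete. Lifting $f \colon C \to X_i$ into $X$ by filling the other coordinates from a base point, then projecting the self-Tietze extension, gives necessity; extending each $p_i \circ f$ separately and reassembling the $F_i$ via the universal property of the (Tychonoff) product gives sufficiency. The paper gives no proof of its own here and simply cites the lemma from \cite{davies2011a}, so there is no alternative route in the paper to compare yours against.
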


\begin{lemma}
The Lebesgue covering dimension of a normal space $X$ is $\leq n$ if, and only if, for every closed $C \subseteq X$ and continuous $f : C \to S^n$, there is a continuous extension $F : X \to S^n$ of $f$. 
\end{lemma}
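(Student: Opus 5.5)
Since this is the classical Alexandroff (Hurewicz--Wallman) characterisation, I would reduce it to the \emph{theorem on partitions}: for normal $X$, $\dim X \leq n$ if, and only if, for every family $(A_1,B_1),\dots,(A_{n+1},B_{n+1})$ of pairs of disjoint closed sets there are partitions $L_i$ between $A_i$ and $B_i$ with $\bigcap_{i=1}^{n+1} L_i = \emptySet$. I take this equivalence as standard dimension theory and do not reprove it. I would also replace $S^n$ by the homeomorphic boundary $\partial I^{n+1}$ of the cube $I^{n+1}=[-1,1]^{n+1}$. A map into $\partial I^{n+1}$ is then an $(n+1)$-tuple of maps into $[-1,1]$ such that at every point some coordinate equals $\pm 1$.

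\emph{($\Leftarrow$)} Suppose maps into $\partial I^{n+1}$ extend, and let pairs $(A_i,B_i)$ be given. Put $C=\bigcup_i (A_i\cup B_i)$, which is closed. By Urysohn's lemma, choose $u_i : X\to[-1,1]$ with $u_i\equiv -1$ on $A_i$ and $u_i \equiv 1$ on $B_i$. Then $f=(u_1,\dots,u_{n+1})|_C$ maps into $\partial I^{n+1}$, because every point of $C$ lies in some $A_i$ or $B_i$. Extend $f$ to $F=(F_1,\dots,F_{n+1}):X\to\partial I^{n+1}$, and set $L_i=F_i^{-1}(0)$.
\begin{itemize}
\item $L_i$ is a partition between $A_i$ and $B_i$: it separates $F_i^{-1}[-1,0)\supseteq A_i$ from $F_i^{-1}(0,1]\supseteq B_i$.
\item $\bigcap_i L_i=F^{-1}(0)=\emptySet$, since $0\notin\partial I^{n+1}$.
\end{itemize}
Hence $\dim X\leq n$.

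\emph{($\Rightarrow$)} Suppose $\dim X\leq n$ and let $f=(f_1,\dots,f_{n+1}):C\to\partial I^{n+1}$ be given.
\begin{enumerate}
\item Extend each $f_i$ by the Tietze Extension Theorem to $F_i:X\to[-1,1]$.
\item Let $A_i=F_i^{-1}[-1,-\tfrac12]$ and $B_i=F_i^{-1}[\tfrac12,1]$. The partition theorem gives partitions $L_i$ with $\bigcap_i L_i=\emptySet$, where $X\setminus L_i=U_i\cup V_i$ with $U_i, V_i$ disjoint open sets, $A_i\subseteq U_i$ and $B_i\subseteq V_i$.
\item Build $G_i:X\to[-1,1]$ with:
\begin{itemize}
\item $G_i=F_i$ on $A_i\cup B_i$;
\item $G_i<0$ on $U_i$ and $G_i>0$ on $V_i$;
\item $G_i^{-1}(0)=L_i$.
\end{itemize}
To do this, modify $F_i$ on the closed set $\{|F_i|\leq\tfrac12\}$ using a distance-type Urysohn function that vanishes exactly on $L_i$. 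I expect to need to shrink to closed $G_\delta$ versions of the sets, or to start instead from partitions defined as exact zero sets, and I would follow Engelking's treatment here.
\item Then $G=(G_1,\dots,G_{n+1})$ never takes the value $0$, and $G=f$ on $C$. Indeed, at each $x\in C$ some $|f_i(x)|=1$, so $x\in A_i\cup B_i$ and $G_i(x)=F_i(x)=f_i(x)$. Since every $|f_j|\leq 1$ and some coordinate equals $\pm1$, $f(x)$ already lies on $\partial I^{n+1}$.
\item Compose $G$ with the radial retraction $r:I^{n+1}\setminus\{0\}\to\partial I^{n+1}$, $r(y)=y/\|y\|_\infty$. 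This fixes $\partial I^{n+1}$ pointwise, so $r\circ G$ is the required extension.
\end{enumerate}

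The main obstacle is step 3 of ($\Rightarrow$): arranging that the new coordinate maps agree with $F_i$ wherever $|F_i|$ is large, while their zero sets are exactly the given partitions. Agreement where $|F_i|$ is large is essential, because otherwise $G$ fails to extend $f$ on $C$. This is where normality is used in a nontrivial way, beyond the Tietze Extension Theorem.
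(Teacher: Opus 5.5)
The paper gives no proof of this lemma; it cites it as well known, so your argument has to stand on its own. Your ($\Leftarrow$) direction is fine. In ($\Rightarrow$), however, there is a genuine gap, and it is in step 4, not in step 3 where you put the difficulty.

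\textbf{The gap.} You assert $G=f$ on $C$, but your justification only shows $G_i(x)=f_i(x)$ for a coordinate $i$ with $|f_i(x)|=1$. Take a coordinate $j$ with $|f_j(x)|<\tfrac12$. Then $x\notin A_j\cup B_j$, so $G_j(x)$ is governed only by the sign pattern of step 3: negative on $U_j$, zero on $L_j$, positive on $V_j$. Since the partition $L_j$ is arbitrary, nothing ties this to $f_j(x)$.

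For a concrete failure, take $n=1$, $X=[0,1]$, $C=\{0\}$, $f(0)=(1,0.3)$, and constant Tietze extensions $F_1\equiv 1$, $F_2\equiv 0.3$. Then $A_1=A_2=B_2=\emptyset$ and $B_1=X$. So $L_1=L_2=\emptyset$ with $U_2=X$ is a legitimate output of the partition theorem. Step 3 then forces $G_2(0)<0$, so $G(0)=(1,G_2(0))\in\partial I^2$ is fixed by $r$, and $r(G(0))\neq f(0)$. You also cannot repair this by adding ``$G_j=F_j$ on $C$'' to step 3, since here that contradicts $G_2<0$ on $U_2$. Agreement where $|F_i|\ge\tfrac12$ is simply not sufficient for $G$ to extend $f$.

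\textbf{The missing idea: interpolate between $F$ and $G$.} Keep a $G$ with $G_i=F_i$ on $A_i\cup B_i$ and $\bigcap_i G_i^{-1}(0)=\emptyset$. By Urysohn, choose $\lambda:X\to[0,1]$ with $\lambda=1$ on $C$ and $\lambda=0$ on the closed set $\{x:\|F(x)\|_\infty\le\tfrac12\}$, which misses $C$. Put $H=\lambda F+(1-\lambda)G$.
\begin{itemize}
\item If $\|F(x)\|_\infty\le\tfrac12$, then $H(x)=G(x)\neq0$.
\item Otherwise some $|F_i(x)|>\tfrac12$, so $x\in A_i\cup B_i$ and $H_i(x)=F_i(x)\neq0$.
\end{itemize}
Hence $r\circ H$ is defined and equals $f$ on $C$. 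This is the concrete form of the classical homotopy argument.

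\textbf{Weakening step 3.} The exact equality $G_i^{-1}(0)=L_i$ forces $L_i$ to be a zero set, which a partition in a normal space need not be. Instead, \emph{swell} (not shrink) the $L_i$ to open sets $O_i\supseteq L_i$ with $\bigcap_i\overline{O_i}=\emptyset$. Let $g_i$ be a Urysohn function equal to $-1$ on $A_i\cup(\overline{U_i}\setminus O_i)$ and $1$ on $B_i\cup(\overline{V_i}\setminus O_i)$. These sets are disjoint because $\overline{U_i}\cap\overline{V_i}\subseteq L_i$. Set $G_i=F_i$ on $\{|F_i|\ge\tfrac12\}$ and $G_i=g_i/2$ on $\{|F_i|\le\tfrac12\}$; on the overlap both equal $\pm\tfrac12$. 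This $G_i$ is continuous, agrees with $F_i$ on $A_i\cup B_i$, and satisfies $G_i^{-1}(0)\subseteq O_i$, which is all the interpolation needs.
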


\begin{proposition}
The torus $X = S^1 \times S^1$ is not self-Tietze, but it is set-self-Tietze. 
\end{proposition}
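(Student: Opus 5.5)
The set-self-Tietze half is immediate. The torus $X = S^1 \times S^1$ is a compact metric space, so Theorem~\ref{theorem:compact-metric-is-set-self-tietze} applies directly. The real work is to show that $X$ is not self-Tietze, and for this I would combine the two lemmas above.

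I would first apply the Self-Tietze Projection Characterisation with $I = \set{1, 2}$ and $X_1 = X_2 = S^1$. If $X$ were self-Tietze, the lemma would give the following: for every closed $C \subseteq X$, every continuous $f : C \to S^1$ extends to a continuous $F : X \to S^1$.

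Next I would invoke the second lemma with $n = 1$. The torus is compact Hausdorff, hence normal. The extension property just obtained is exactly the criterion for $\dim X \leq 1$, so the lemma would force the covering dimension of the torus to be at most $1$.

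Finally I would contradict this with the standard fact that the torus, being a $2$-manifold, has covering dimension $2$. To keep the argument self-contained I would exhibit an explicit non-extendable map instead. Take an embedded closed disc $D \subseteq X$ inside a coordinate chart, let $C = \partial D \cong S^1$, and let $f : C \to S^1$ be a homeomorphism. Suppose $F : X \to S^1$ extends $f$. Restricting to $D$ gives $\Restriction{F}|{D} : D \to S^1$ extending a degree-one map of $\partial D$. Since $D$ is contractible, $\Restriction{F}|{D}$ is null-homotopic, so its restriction $f$ to $\partial D$ is null-homotopic as well. But $f$ has degree one, so it is not null-homotopic, which is a contradiction.

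The only delicate point is matching hypotheses. The projection lemma is stated for closed $C$ of the whole product and maps into the factors, so I need to check that it is being applied in the direction "self-Tietze implies factor extensions", which is the trivial direction: compose an extension $X \to X$ with the projection $p_i$. I expect no genuine obstacle; the main step is just the dimension computation, and the explicit disc argument settles it.
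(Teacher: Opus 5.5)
Your proof is correct and follows the paper's route. Set-self-Tietze-ness comes from Theorem~\ref{theorem:compact-metric-is-set-self-tietze}, and failure of self-Tietze comes from the trivial direction of the projection characterisation, combined with the dimension lemma for $n = 1$ and the fact that $\dim(S^1 \times S^1) = 2$. Your additional disc argument ($C = \partial D$, $f$ a homeomorphism onto $S^1$, contradiction via contractibility of $D$) is a valid and more elementary explicit witness, and it makes both the dimension lemma and the fact $\dim X = 2$ unnecessary, whereas the paper simply cites them.
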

\begin{proof}
Observe $X$ is a compact metric space, which implies it is set-self-Tietze by Theorem~\ref{theorem:compact-metric-is-set-self-tietze}. To see why $X$ is not self-Tietze, we firstly note the covering dimension of $X$ is $2$. It follows by the above lemma that there is closed $C \subseteq X$ and continuous $f : C \to S^1$ such that $f$ has no continuous extension $F : X \to S^1$. By the Self-Tietze Projection Characterisation, this implies $X$ is not self-Tietze. 
\end{proof}

\section{Comparing to self-Tietze}
In this section, we compare the self-Tietze property to the set-self-Tietze property.  We consider results that hold for self-Tietze spaces, proved in~\cite{davies2011a}, and determine whether they hold for set-self-Tietze as well. Throughout this section, we assume all spaces are at least $T_1$.  

Every Hausdorff self-Tietze space is normal. We find the analogous statement for set-self-Tietze holds too.

\begin{proposition}
Every Hausdorff set-self-Tietze space $X$ is normal. 
\end{proposition}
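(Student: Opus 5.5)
The plan is to adapt the standard argument that Hausdorff self-Tietze spaces are normal. There, one maps two disjoint closed sets to two distinct points, extends continuously, and pulls back disjoint neighbourhoods of the two points. For usc set-valued maps the pullback has to be done with \emph{upper} preimages, and the point to check is that these are still disjoint.

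First I dispose of the trivial case: if $X$ has at most one point, it is normal. Otherwise fix distinct points $p, q \in X$. Since $X$ is Hausdorff, choose disjoint open sets $U \ni p$ and $V \ni q$. Now let $A, B \subseteq X$ be disjoint closed sets; we may assume both are non-empty, since otherwise the separation is trivial. Put $C = A \union B$, which is closed in $X$, and define $f : C \to 2^X$ by $f(x) = \set{p}$ for $x \in A$ and $f(x) = \set{q}$ for $x \in B$. Singletons are closed because $X$ is $T_1$, so $f$ indeed takes values in $2^X$.

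Next I verify that $f$ is usc. Since $A$ and $B$ are disjoint and closed, each is clopen in $C$. Given $x \in C$ and an open $W \supseteq f(x)$, the piece ($A$ or $B$) containing $x$ is then a neighbourhood of $x$ in $C$ on which $f$ is constant with value $f(x) \subseteq W$. By the set-self-Tietze property there is a usc $F : X \to 2^X$ with $\Restriction F|C = f$.

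Finally, define
\[
U' = \setOf{x \in X}:{F(x) \subseteq U}, \qquad V' = \setOf{x \in X}:{F(x) \subseteq V}.
\]
Upper semi-continuity of $F$ means exactly that upper preimages of open sets are open, so $U'$ and $V'$ are open. Moreover $A \subseteq U'$ because $F(x) = \set{p} \subseteq U$ for $x \in A$, and likewise $B \subseteq V'$. For disjointness, note that every value $F(x)$ is non-empty since $F$ maps into $2^X$. Hence $x \in U' \intersect V'$ would give $\emptySet \ne F(x) \subseteq U \intersect V = \emptySet$, a contradiction. So $U'$ and $V'$ separate $A$ and $B$, and $X$ is normal.

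The only delicate point is this last one: the argument uses upper preimages together with the non-emptiness of the values of $F$. With lower preimages $\setOf{x}:{F(x) \intersect U \ne \emptySet}$ we would lose both openness and disjointness. I do not expect any other obstacle.
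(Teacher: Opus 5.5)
Your proof is correct and follows essentially the same route as the paper: send $A$ and $B$ to two distinct singletons (the paper uses $a \in A$ and $b \in B$, you use arbitrary distinct $p, q$), extend to a usc $F$, and take upper preimages of disjoint open neighbourhoods. Your explicit appeal to the non-emptiness of the values of $F$ for disjointness, and your treatment of the degenerate cases, spell out steps the paper leaves implicit.
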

\begin{proof}
Let $A$ and $B$ be non-empty disjoint closed subsets of $X$, and fix $\p{a, b} \in A \times B$. Define $f : A \union B \to 2^X$ such that $f\of{A} = \set{a}$ and $f\of{B} = \set{b}$. Note $f$ is well-defined because $A$ and $B$ are disjoint, and $X$ is $T_1$. Since $A$ and $B$ are both closed in $A \union B$, it follows $f$ is usc. By the set-self-Tietze property, there exists usc extension $F : X \to 2^X$ of $f$. By the Hausdorff property, there exists disjoint open nhoods $U$ and $V$ of $a$ and $b$, respectively, in $X$. Note $O_A = \setOf{x \in X}:{F\of{x} \subseteq U}$ and $O_B = \setOf{x \in X}:{F\of{x} \subseteq V}$ are open in $X$. Observe $O_A$ and $O_B$ are disjoint because $U$ and $V$ are disjoint. Moreover, $A \subseteq O_A$ and $B \subseteq O_B$. Thus, $X$ is normal. 
\end{proof}

Every disconnected, self-Tietze space is ultranormal.  We find this is not the case for set-self-Tietze.

\begin{example}
Consider $X = [0, 1] \union \set{2}$, which is set-self-Tietze by Theorem~\ref{theorem:compact-metric-is-set-self-tietze}. Moreover, $X$ is disconnected. However, it is not ultranormal, since $A = [0, \Fraction 1 / 4]$ and $B = [\Fraction 3/4, 1]$ are disjoint closed subsets of $X$, but the only non-trivial clopen partition is $\set{[0, 1], \set{2}}$.
\end{example}

Although we may not deduce ultranormality by disconnectedness of a set-self-Tietze space, we may deduce normality.

\begin{proposition}
If a set-self-Tietze $X$ is disconnected, then it is normal. 
\end{proposition}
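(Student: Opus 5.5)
The plan is to mimic the proof of the previous proposition, using disconnectedness in place of the Hausdorff property to separate two points. Since $X$ is disconnected, I fix a non-trivial clopen partition $X = P \union Q$ with $P, Q$ non-empty, disjoint and open. Given non-empty disjoint closed sets $A, B \subseteq X$, I would pick $p \in P$ and $q \in Q$ and define $f : A \union B \to 2^X$ by $f\of{x} = \set{p}$ for $x \in A$ and $f\of{x} = \set{q}$ for $x \in B$. Singletons are closed because $X$ is $T_1$, so $f$ indeed takes values in $2^X$.

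To see that $f$ is usc, I would argue as in the previous proposition. Take an open $W \subseteq X$. The set $\setOf{x \in A \union B}:{f\of{x} \subseteq W}$ is one of $\emptySet$, $A$, $B$ or $A \union B$. Each of these is open in $A \union B$, because $A$ and $B$ are complementary closed subsets of $A \union B$.

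By the set-self-Tietze property, $f$ has a usc extension $F : X \to 2^X$. By upper semi-continuity, the sets $O_A = \setOf{x \in X}:{F\of{x} \subseteq P}$ and $O_B = \setOf{x \in X}:{F\of{x} \subseteq Q}$ are open in $X$. They contain $A$ and $B$ respectively, since $F$ agrees with $f$ there. They are disjoint because each $F\of{x}$ is non-empty and $P \intersect Q = \emptySet$. Hence $A$ and $B$ are separated by disjoint open sets, and $X$ is normal.

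There is no real obstacle; the only point to check is that the clopen partition supplies the two disjoint open sets that the Hausdorff property supplied before. If $A$ or $B$ is empty, normality is trivial: take $X$ and $\emptySet$ as the separating open sets.
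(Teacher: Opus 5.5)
Your proof is correct and follows the paper's argument: send $A$ and $B$ to points on opposite sides of a clopen partition, extend to a usc $F$, and pull back. The only cosmetic difference is that the paper exhibits $F^{-1}(E)$ as a closed neighbourhood of $A$ missing $B$, whereas you directly produce the disjoint open sets $\{x : F(x) \subseteq P\}$ and $\{x : F(x) \subseteq Q\}$, exactly as in the Hausdorff case.
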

\begin{proof}
Let $A$ and $B$ be disjoint closed subsets of $X$, and suppose $E$ is a non-trivial clopen subset of $X$. Fix $a \in E$ and $b \in X \setMinus E$. Define $f : A \union B \to X$ such that $f\of{A} = \set{a}$ and $f\of{B} = \set{b}$. Observe $f$ is well-defined because $A$ and $B$ are disjoint, and $X$ is $T_1$. Since $A$ and $B$ are both closed in $A \union B$, it follows $f$ is usc. As $X$ is set-self-Tietze and $A \union B$ is closed in $X$, there exists usc extension $F : X \to 2^X$ of $f$. It follows $F^{-1}\of{E}$ is closed in $X$ and $O = \setOf{x \in X}:{F\of{x} \subseteq E}$ is open in $X$. Notice $A \subseteq O \subseteq F^{-1}\of{E}$.  That is to say, $F^{-1}\of{E}$ is a closed nhood of $A$, disjoint from $B$. Thus, $X$ is normal. 
\end{proof}

A topological space $X$ is {\color{blue} \emph{retractifiable}}, if every closed subset $A$ of $X$ is a retract, i.e., there is a continuous map $f : X \to A$ such that $\left. f \right|_A = \text{id}_A$. For example, ordinals are retractifiable, and so are separable, metric, zero-dimensional spaces. Retractifiable spaces are self-Tietze.  We end by showing this is true of set-self-Tietze spaces as well.

\begin{proposition}
Every retractifiable space $X$ is set-self-Tietze.
\end{proposition}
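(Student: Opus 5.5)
The plan is to mimic the proof of Theorem~\ref{theorem:compact-metric-is-set-self-tietze}, replacing the nearest-point relation $g$ by a genuine retraction. Let $A \subseteq X$ be closed and $f : A \to 2^X$ usc. Since $X$ is retractifiable, there is a continuous map $r : X \to A$ with $\Restriction r|A = \text{id}_A$. We then set $F = f \circ r : X \to 2^X$, so that $F\of{x} = f\of{r\of{x}}$ for each $x \in X$.

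First, $F$ is well-defined: each $F\of{x}$ is a non-empty closed subset of $X$, because $r\of{x} \in A$ and $f$ takes values in $2^X$. Second, $F$ extends $f$: for $x \in A$ we have $r\of{x} = x$, hence $F\of{x} = f\of{x}$.

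The only point needing real verification is upper semi-continuity. We would check it directly rather than appeal to a general composition result. Fix an open $U \subseteq X$. Then
\[
\setOf{x \in X}:{F\of{x} \subseteq U} = r^{-1}\of{\setOf{a \in A}:{f\of{a} \subseteq U}}.
\]
The inner set is open in $A$ because $f$ is usc, and its preimage under the continuous map $r$ is open in $X$. Hence $F$ is usc. Alternatively, one can view $r$ as the single-valued usc map $x \mapsto \set{r\of{x}}$ (its values are closed since $X$ is $T_1$) and invoke the composition of usc functions, as in the proof of Theorem~\ref{theorem:compact-metric-is-set-self-tietze}; the direct preimage argument avoids any compactness assumptions.

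The main obstacle is therefore minimal. One only has to be careful that usc is formulated via the sets $\setOf{x}:{F\of{x} \subseteq U}$ being open, so that no closed-graph or compactness hypothesis is needed. The argument above works in that formulation. This shows $X$ is set-self-Tietze.
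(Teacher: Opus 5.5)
Your proof is correct and is essentially the paper's: both set $F = f \circ r$ for a retraction $r : X \to A$ and pull back through $r$. The only difference is cosmetic. The paper checks the dual condition that $F^{-1}(C) = \{x : F(x) \cap C \neq \emptyset\} = r^{-1}(f^{-1}(C))$ is closed for closed $C$, and this set is just the complement of your open set $r^{-1}(\{a \in A : f(a) \subseteq X \setminus C\})$.
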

\begin{proof}
Let $A$ be a closed subset of $X$ and $f : A \to 2^X$ be usc. There exists continuous $r : X \to A$ such that $\left. r\right|_A = \text{id}_A$. Define $F : X \to 2^X$ by $F\of{x} = f\of{r\of{x}}$ for each $x \in X$. Note $F\of{x} = f\of{x}$ for each $x \in A$. Let $C$ be closed in $X$. Observe
\begin{align*}
&&  F^{-1}\of{C} &= \setOf{x \in X}:{f\of{r\of{x}} \intersect C \neq \emptySet}   && \\
&&  &= \setOf{x \in X}:{r\of{x} \in f^{-1}\of{C}} && \\
&& &= r^{-1}\of{f^{-1}\of{C}}.  &&
\end{align*}
It follows $F^{-1}\of{C}$ is closed in $X$ because $f$ is usc, $r$ is continuous, and $C$ is closed in $X$. Thus, $X$ is set-self-Tietze. 
\end{proof}

\bibliographystyle{plain}
\bibliography{references}

\end{document}